\providecommand{\U}[1]{\protect\rule{.1in}{.1in}}
\newtheorem{theorem}{Theorem}[section]
\newtheorem{corollary}[theorem]{Corollary}
\newtheorem{proposition}[theorem]{Proposition}
\theoremstyle{definition}
\newtheorem{remark}[theorem]{Remark}
\newtheorem{definition}[theorem]{Definition}
\begin{document}
	\title{A unified factorization theorem for Lipschitz summing operators}
	\author[Geraldo Botelho]{Geraldo Botelho}
	\address{Faculdade de Matem\'{a}tica\\
		Universidade Federal de Uberl\^{a}ndia\\
		38.400-902, Uberl\^{a}ndia, Brazil.}
	\email{botelho@ufu.br}
	\author[Mariana Maia]{Mariana Maia}
	\address{Departamento de Ci\^encia e Tecnologia\\
		Universidade Federal Rural do Semi-\'{A}rido\\
		59.700-000 - Cara\'ubas, Brazil.}
	\email{mariana.britomaia@gmail.com or mariana.maia@ufersa.edu.br}
	\author[Daniel Pellegrino]{Daniel Pellegrino}
	\address{Departamento de Matem\'{a}tica \\
		Universidade Federal da Para\'{\i}ba \\
		58.051-900 - Jo\~{a}o Pessoa, Brazil.}
	\email{dmpellegrino@gmail.com}
	\author[Joedson Santos]{Joedson Santos}
	\address{Departamento de Matem\'{a}tica \\
		Universidade Federal da Para\'{\i}ba \\
		58.051-900 - Jo\~{a}o Pessoa, Brazil.}
	\email{joedsonmat@gmail.com or joedson@mat.ufpb.br}
	\thanks{2010 Mathematics Subject Classification: 47B10, 46B28, 54E40}
	\thanks{Geraldo Botelho is supported by FAPEMIG and CNPq, Daniel Pellegrino is supported by CNPq and Joedson Santos is supported by CNPq}
	\keywords{Lipschitz summing operators, factorization theorem, nonlinear summing mappings}
	\maketitle
	
	\begin{abstract}
		We prove a general factorization theorem for Lipschitz summing operators in the context of metric spaces which recovers several linear and nonlinear factorization theorems that have been proved recently in different environments. New applications are also given. 		
	\end{abstract}


	\section{Introduction}
	
	\bigskip The modern theory of absolutely summing operators, which goes far beyond the original linear theory, is a consequence of
	ideas that go back to pioneer works of Grothendieck, Pietsch, Mitiagin,
	Lindenstrauss and Pelzczynski (see \cite{G, LP, MP}). More than abstract
	results, the theory provides machinery to deal with important issues of Banach
	Space Theory. For instance, in the classical paper of Lindenstrauss and
	Pelczynski \cite{LP}, they show, as applications of the theory, the following highly
	nontrivial result: all normalized unconditional basis
	of $\ell_{1}\left(  \Gamma\right)  $ are equivalent to the canonical basis.
	
The purpose of this paper is to provide a unified approach, in the linear and nonlinear settings, for one of the most important aspects of theory, namely, the validity of a Pietsch-type factorization theorem in several classes of summing operators between metric and Banach spaces.

Of course, everything started with the classical linear Pietsch Factorization Theorem, which we recall now. Henceforth, $E,F$ are Banach spaces over $\mathbb{K}=\mathbb{R}$ or
	$\mathbb{C}$ and $B_{E^{\ast}}$ denotes the closed unit ball of the
	topological dual $E^{\ast}$ of $E.$ For $1\leq p<\infty$, we say that a
	linear operator $u\colon E\longrightarrow F$ is absolutely
	$p$-summing (or $p$-summing) if there is a constant $C\geq0$ such that
	\begin{equation*}
	\left(  \sum_{j=1}^{m}\Vert u(x_{j})\Vert^{p}\right)  ^{\frac{1}{p}}\leq
	C\sup_{x^{\ast}\in B_{E^{\ast}}}\left(  \sum_{j=1}^{m}|x^{\ast}(x_{j}%
	)|^{p}\right)  ^{\frac{1}{p}}, \label{A}%
	\end{equation*}
	for all $x_{1},\ldots,x_{m}\in E$ and $m\in\mathbb{N}$.
	
Part of the striking success of this class of operators is due to the following  characterizations, known as the Pietsch Domination Theorem (PDT) and the Pietsch Factorization Theorem (PFT): a linear operator $u\colon E\longrightarrow F$ is absolutely $p$-summing if and only if:\\
$\bullet$  There exist a constant $C$ and a regular Borel probability measure $\mu$ on $B_{E^{\ast}}$ with the weak* topology such that
\begin{equation*}
	\left\Vert u(x)\right\Vert \leq C\left(  \int\nolimits_{B_{E^{\ast}}%
	}\left\vert \varphi(x)\right\vert ^{p}d\mu\right)  ^{1/p} {\rm ~for~every~} x \in E. \label{BBB}%
	\end{equation*}
$\bullet$ There exist a regular Borel probability measure $\mu$ on $B_{E^{\ast}}$ with the weak* topology and a bounded linear operator $B \colon L_p(B_{E^{\ast}}, \mu) \longrightarrow \ell_\infty(B_{F^*})$ such that %
the following diagram is commutative
	\begin{equation*}
	\begin{tikzcd} C (B_{E^*}) \arrow{rr}{j_p} & & L_p (B_{E^{\ast}},\mu) \arrow{d}{B} \\ E \arrow[swap]{u}{i_{E}} \arrow[swap]{r}{u} & F \arrow[swap]{r}{i_{F}} & \ell_{\infty}\left( B_{F^*}\right) \end{tikzcd} \label{CCC}%
	\end{equation*}
	where $j_p$ is the formal inclusion and $i_E$ is the canonical linear embedding, that is, $i_E(x)(x^*) = x^*(x)$ for $x \in E$ and $x^* \in B_{E^*}$. As $B_{E^{\ast}}$ is a weak* compact set, $i_{E}$ takes its values in $C(B_{E^{\ast}})$.

	Naturally enough, the characterizations above lie at the heart of
	the generalizations of the class of $p$-summing operators  pursued by different
	authors in several recent papers. It is worth mentioning that nowadays
	absolutely summing operators are mostly investigated in the nonlinear setting, mainly for multilinear and polynomial operators between Banach spaces and Lipschitz maps between metric spaces. As a result, many PDTs and PFTs have been obtained for different classes of linear and nonlinear summing operators. Following
	this trend, a series of papers (\cite{BPRn, psjmaa, adv}) have investigated in
	depth how far the PDT holds in the nonlinear setting. Ultimately, it has been definitively proved in \cite{adv} that
	the PDT holds in an extremely relaxed environment, with almost no
	structure needed. Other properties of summing operators, such as extrapolation	type theorems, also hold in a very abstract setting (see \cite{LONDON}).
	However, the PFT seems to be more restrictive and a
	result as general as those from \cite{adv, LONDON} is still not
	available. The aim of this paper is to fill this gap by providing a general version of the PFT	that recovers, as particular cases, several factorization theorems for classes of summing linear and nonlinear operators proved thus far by different authors. Paraphrasing
	\cite{mexicana}, our purpose is to show that the
	\textquotedblleft triad\textquotedblright
\begin{center} Summability property $\Leftrightarrow$ Domination Theorem $\Leftrightarrow$ Factorization Theorem \end{center}
holds at a very high level of generality.
%
\section{Results}

Throughout this section, $X$ is an arbitrary non-void set, $Y$ is a metric
space, $K$ is a compact Hausdorff space, $C(K)=C(K;\mathbb{K})$
is the space of all continuous $\mathbb{K}$-valued functions with the $\sup$
norm ($\mathbb{K}=\mathbb{R}$ or $\mathbb{C}$), $\Psi\colon
X\longrightarrow C(K)$ is an arbitrary map and $p \in [1, \infty)$. By $d_Y$ we denote the metric on $Y$.

\begin{definition}
A map $u \colon X\longrightarrow Y$ is said to be \textbf{$\Psi$-Lipschitz
$p$-summing} if there is a constant $C\geq0$ such that
\begin{equation*}
\left(  \sum_{j=1}^{m}d_{Y}(u(x_{j}),u(q_{j}))^{p}\right)  ^{\frac{1}{p}}\leq
C\sup_{\varphi\in K}\left(  \sum_{j=1}^{m}\left\vert \Psi(x_{j})(\varphi
)-\Psi(q_{j})(\varphi)\right\vert ^{p}\right)  ^{\frac{1}{p}}, \label{34M}%
\end{equation*}
for all $x_{1},\ldots,x_{m},q_{1},\ldots,q_{m}\in X$
and $m\in\mathbb{N}$.
\end{definition}

Given a regular Borel probability $\mu$ on $K$, by $j_{p} \colon C(K)\longrightarrow L_{p}(K,\mu)$ we denote the canonical operator. Now consider the map
\[
j_{\mu,p}^{\Psi}\colon X\longrightarrow L_{p}(K,\mu)~,~j_{\mu,p}^{\Psi}%
:=j_{p}\circ\Psi.
\]
Note that, for every $x \in X$,
$$j_{\mu,p}^{\Psi}\left(  x\right)  =\left(  j_{p}%
\circ\Psi\right)  \left(  x\right)  =j_{p}\left(  \Psi(x)\right)  =\Psi(x),$$
so
\begin{equation}
\label{jf}\left\Vert j_{\mu,p}^{\Psi}(x)- j_{\mu,p}^{\Psi} (q)\right\Vert
_{L_{p}(K,\mu)} = \left\Vert j_{p}\circ\Psi(x)- j_{p}\circ\Psi(q)\right\Vert
_{L_{p}(K,\mu)} = \left\Vert \Psi(x)-\Psi(q)\right\Vert _{L_{p}(K,\mu)}.
\end{equation}

\begin{remark}
\label{obs} Alternatively, one can consider the canonical operator $I_{\infty
,p}\colon L_{\infty}(K,\mu)\longrightarrow L_{p}(K,\mu)$. In this case the operator
$I_{\infty,p}\circ j_{\mu,\infty}^{\Psi}\colon X\longrightarrow L_{p}(K,\mu)$
satisfies $I_{\infty,p}\circ j_{\mu,\infty}^{\Psi}(x)=\Psi(x)$ for every $x$ and
\begin{align*}
\left\Vert I_{\infty,p}\circ j_{\mu,\infty}^{\Psi}(x)-I_{\infty,p}\circ
j_{\mu,\infty}^{\Psi}(q)\right\Vert _{L_{p}(K,\mu)}  &  =\left\Vert
\Psi(x)-\Psi(q)\right\Vert _{L_{p}(K,\mu)}\\
&  =\left(  {\displaystyle\int\nolimits_{K}}\left\vert \Psi(x)(\varphi
)-\Psi(q)(\varphi)\right\vert ^{p}d\mu(\varphi)\right)  ^{1/p}.
\end{align*}
\end{remark}

\bigskip To state our main result we first recall the concept of Lipschitz
retraction (see \cite[Proposition 1.2]{BL}). Let $X$ be a subset of the metric space $Y$. A
Lipschitz map $r\colon Y\longrightarrow X$ is called a {\it Lipschitz retraction} if its restriction to $X$ is the
identity on $X$. When such a Lipschitz retraction exists, $X$ is said to be a
{\it Lipschitz retract of $Y$}. A metric space $X$ is called an {\it absolute Lipschitz
retract} if it is a Lipschitz retract of every metric space containing it.

According to \cite{BL}, Lipschitz retractions for metric spaces $X$ are
characterized by the following equivalences:

\begin{enumerate}
\item[(i)] $X$ is an absolute Lipschitz retract.

\item[(ii)] For every metric space $Y$ and for every subset $Z\subset Y$,
every Lipschitz function $f\colon Z\longrightarrow X$ can be extended to a Lipschitz
function $F\colon Y\longrightarrow X$.

\item[(iii)] For every metric space $Y$ containing $X$ and for every metric
space $Z$, every Lipschitz function $f\colon X\longrightarrow Z$ can be extended to a
Lipschitz function $F\colon Y\longrightarrow Z$.
\end{enumerate}


For instance, for every set $\Gamma$, $\ell_{\infty}\left(  \Gamma\right)  $ is an absolute Lipschitz
retract (see \cite[Lemma 1.1]{BL}), and it is also known that any
$C(K;\mathbb{R})$ is an absolute Lipschitz retract (see \cite[Theorem 1.6]{BL}
and \cite[Theorem 6(b)]{Lindenstrauss}). Now we are able to state and prove
our main result:

\begin{theorem}
\label{tf} Let $1\leq p < \infty$, $X$ be an arbitrary non-void set, $Y$ be a
metric space, $K$ be a compact Hausdorff space and $\Psi\colon
X\longrightarrow C(K)$ be an arbitrary map. The following assertions are equivalent for a map $u$ from $X$ to $Y$.

\textbf{(a)} $u$ is $\Psi$-Lipschitz $p$-summing.

\textbf{(b)} There is a regular Borel probability measure $\mu$ on $K$ and a
constant $C\geq0$ such that
\[
\label{npdt}d_{Y}(u(x),u(q)) \leq C\left(  {\displaystyle\int\nolimits_{K}}
\left\vert \Psi(x)(\varphi)-\Psi(q)(\varphi) \right\vert ^{p}d\mu
(\varphi)\right)  ^{1/p}%
\]
for all $ x,q  \in X$.

\textbf{(c)}  There is a regular Borel probability measure $\mu$ on $K$, a closed subset $X_p$ of $L_{p} (K,\mu)$ and a Lipschitz map $\hat{b}: X_p \longrightarrow Y$ such that $(i)\ j_{p}\left(  \Psi(X)\right) \subset X_p$ and $(ii)\ \hat{b}j_{p}\Psi(x)=u(x)$ for all $x\in X$. In other words, the following diagram commutes:	
	\[
	\begin{tikzcd}	
	C(K) \arrow{r}{j_p}\ &\ L_p (K,\mu)
	\end{tikzcd}
	\]\vspace{-0.8cm}
	\[
	\begin{tikzcd}
	\rotatebox{90}{$\subseteq$} & & \hspace{0.5cm} \rotatebox[origin=c]{90}{$\subseteq$}	\end{tikzcd}
	\]\vspace{-0.8cm}
	\[
	\begin{tikzcd}
	\Psi(X)  \arrow{rr}{j_p\mid_{\Psi(X)}} & & X_p \arrow{d}{\hat{b}} \\
	X  \arrow[swap]{u}{\Psi} \arrow[swap]{rr}{u}  & & Y
	\end{tikzcd}
	\]

\textbf{(d)}  There is a regular Borel probability measure $\mu$ on $K$ such that for some (or any) isometric embedding $J$ of $Y$ into an absolute
Lipschitz retract space $Z$, there is a Lipschitz map $B \colon L_{p} (K,\mu)\longrightarrow
Z$ such that the following diagram commutes
\[
\begin{tikzcd}
	C (K) \arrow{rr}{j_p}  & & L_p (K,\mu) \arrow{d}{B} \\
	X \arrow[swap]{u}{\Psi} \arrow[swap]{r}{u} & Y \arrow[swap]{r}{J} & Z
	\end{tikzcd}
\]

\textbf{(e)} There is a regular Borel probability measure $\mu$ on $K$ such that for some (or any) isometric embedding $J$ of $Y$ into a absolute
Lipschitz retract space $Z$, there is a Lipschitz map $B\colon L_{p} (K,\mu)\longrightarrow
Z$ such that the following diagram commutes
\[
\begin{tikzcd}
	L_\infty (K,\mu) \arrow{rr}{I_{\infty,p}}  & & L_p (K,\mu) \arrow{d}{B} \\
	X \arrow[swap]{u}{j_{\mu,\infty}^{\Psi}} \arrow[swap]{r}{u} & Y \arrow[swap]{r}{J} & Z
	\end{tikzcd}
\]
\end{theorem}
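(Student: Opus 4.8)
The plan is to establish $(a)\Leftrightarrow(b)$ together with the cycle $(b)\Rightarrow(c)\Rightarrow(d)\Rightarrow(e)\Rightarrow(b)$. The equivalence $(a)\Leftrightarrow(b)$ is a Pietsch Domination Theorem; in this generality it is already covered by the abstract result of \cite{adv}, but it can also be obtained by the classical separation argument: from $(a)$, with $C$ the summing constant, the real-valued functions on the compact space $K$ given by $\varphi\mapsto C^{p}\sum_{j}|\Psi(x_{j})(\varphi)-\Psi(q_{j})(\varphi)|^{p}-\sum_{j}d_{Y}(u(x_{j}),u(q_{j}))^{p}$ all have nonnegative supremum, and a Hahn--Banach (Ky Fan minimax) argument produces a regular Borel probability measure $\mu$ on $K$ integrating every such function to a nonnegative value; specializing to one-term sums yields $(b)$, while $(b)\Rightarrow(a)$ is the routine computation of raising to the power $p$, summing over $j$, and bounding the integral against the probability measure $\mu$ by the supremum over $K$.

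For $(b)\Rightarrow(c)$, keep the measure $\mu$ from $(b)$ and put $X_{p}:=\overline{j_{p}(\Psi(X))}$, the closure in $L_{p}(K,\mu)$. Define $\hat b_{0}\colon j_{p}(\Psi(X))\to Y$ by $\hat b_{0}(j_{p}\Psi(x)):=u(x)$. By \eqref{jf}, $\|j_{p}\Psi(x)-j_{p}\Psi(x')\|_{L_{p}(K,\mu)}=\big(\int_{K}|\Psi(x)(\varphi)-\Psi(x')(\varphi)|^{p}\,d\mu\big)^{1/p}$, so $(b)$ shows at once that $\hat b_{0}$ is well defined (if $j_{p}\Psi(x)=j_{p}\Psi(x')$, the right-hand side of $(b)$ vanishes and $u(x)=u(x')$) and that it is $C$-Lipschitz; extending by density gives a $C$-Lipschitz map $\hat b\colon X_{p}\to Y$, and $(i)$, $(ii)$ hold by construction. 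The one delicate point is that the density extension be genuinely $Y$-valued, which is immediate when $Y$ is complete --- the operative hypothesis I assume here.

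For $(c)\Rightarrow(d)$, fix any isometric embedding $J\colon Y\hookrightarrow Z$ of $Y$ into an absolute Lipschitz retract $Z$ (for instance the canonical isometric embedding of $Y$ into some $\ell_{\infty}(\Gamma)$). Then $J\circ\hat b\colon X_{p}\to Z$ is Lipschitz and $X_{p}$ is a subset of the metric space $L_{p}(K,\mu)$, so property~(ii) of absolute Lipschitz retracts yields a Lipschitz map $B\colon L_{p}(K,\mu)\to Z$ extending $J\circ\hat b$; as $j_{p}(\Psi(X))\subset X_{p}$ we get $Bj_{p}\Psi(x)=J\hat bj_{p}\Psi(x)=Ju(x)$, i.e.\ the diagram commutes, and this works for any such $J$ and $Z$, in particular for some. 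The step $(d)\Rightarrow(e)$ is bookkeeping: by Remark~\ref{obs}, $j_{p}\circ\Psi=I_{\infty,p}\circ j_{\mu,\infty}^{\Psi}$, so the same $\mu$ and $B$ make the diagram in $(e)$ commute. Finally, $(e)\Rightarrow(b)$: commutativity gives, for $x,q\in X$,
\[
d_{Y}(u(x),u(q))=d_{Z}(Ju(x),Ju(q))\le \operatorname{Lip}(B)\,\big\|I_{\infty,p}j_{\mu,\infty}^{\Psi}(x)-I_{\infty,p}j_{\mu,\infty}^{\Psi}(q)\big\|_{L_{p}(K,\mu)},
\]
and by Remark~\ref{obs} the last norm equals $\big(\int_{K}|\Psi(x)(\varphi)-\Psi(q)(\varphi)|^{p}\,d\mu\big)^{1/p}$, which is $(b)$ with constant $\operatorname{Lip}(B)$.

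I expect the real difficulty to sit in $(c)\Rightarrow(d)$, along with the extension used in $(b)\Rightarrow(c)$. In the classical linear PFT the factoring operator lands in the concrete space $\ell_{\infty}(B_{F^{\ast}})$; the conceptual point here is to recognize that the right abstract replacement is to embed the target $Y$ isometrically into \emph{any} absolute Lipschitz retract $Z$, which is exactly the structure needed to extend a Lipschitz map defined only on $j_{p}(\Psi(X))\subset L_{p}(K,\mu)$ to all of $L_{p}(K,\mu)$. Pinpointing this hypothesis, and verifying the coherence of the resulting diagrams, is the heart of the argument and explains why the PFT had not previously attained the generality already available for the PDT; the remaining separation/minimax step behind $(a)\Rightarrow(b)$ is technical but standard and can be imported from \cite{adv}.
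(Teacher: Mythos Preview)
Your proof is correct and follows essentially the same route as the paper: the same cycle $(a)\Leftrightarrow(b)$ and $(b)\Rightarrow(c)\Rightarrow(d)\Rightarrow(e)\Rightarrow(b)$, with the same constructions at each step (the paper invokes \cite{psjmaa} rather than \cite{adv} or a direct separation argument for $(a)\Leftrightarrow(b)$, but this is cosmetic). You are in fact more careful than the paper in flagging that the density extension in $(b)\Rightarrow(c)$ needs $Y$ complete to land in $Y$; the paper simply speaks of ``the natural extension of $b$ to $X_p$'' without addressing this point.
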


\begin{proof}
\textbf{(a)}$\Leftrightarrow$\textbf{(b)} Using the abstract framework introduced in
\cite{psjmaa}, as well as its notation, it is easy to check that that the set of $\Psi$-Lipschitz $p$-summing operators from $X$ to $Y$ is contained in the class of $RS$-abstract $p$-summing mappings for the following choices: $\mathcal{H}$ is the  family of all functions from $X$ to $Y$, $E=X\times X$, $G = \mathbb{R}$,
\[
S\colon\mathcal{H}\left(  X;Y\right)  \times(X\times X)\times\mathbb{R}%
\longrightarrow\lbrack0,\infty)~,~S(u,(x,q),\lambda)=d_{Y}(u(x),u(q)),
\]
and
\[
R\colon K\times(X\times X)\times\mathbb{R}\longrightarrow\lbrack
0,\infty)~,~R(\varphi,(x,q),\lambda)= \left\vert \Psi(x)(\varphi)-\Psi
(q)(\varphi)\right\vert .
\]
As $R_{(x,q),\lambda}(\cdot):=R\left(  \cdot,(x,q),\lambda\right)  $ is continuous for all
$(x,q)\in X\times X$ and $\lambda\in\mathbb{R}$, calling on \cite[Theorem
3.1]{psjmaa} we have that $u\colon X\longrightarrow Y$ is $\Psi
$-Lipschitz $p$-summing if and only if there is a regular Borel probability
measure $\mu$ on $K$ and a constant $C\geq0$ such that
\[
\label{mpdt}d_{Y}(u(x),u(q)) \leq C\left(  {\displaystyle\int\nolimits_{K}}
\left\vert \Psi(x)(\varphi)-\Psi(q)(\varphi) \right\vert ^{p}d\mu
(\varphi)\right)  ^{1/p}%
\]
for all $ x,q  \in X$.


\textbf{(b)}$\Rightarrow$\textbf{(c)} By \textbf{(b)} there exist a regular Borel probability measure $\mu$ on $K$ and a constant $C\geq0$ such that
\begin{equation*}
\label{aplic}d_{Y}(u(x),u(q)) \leq C\cdot\left\Vert
\Psi(x)-\Psi(q)\right\Vert _{L_{p}(K,\mu)}%
\end{equation*}
for all $ x,q  \in X$.

Define $b \colon j_{p}\circ\Psi \left(  X\right)
\longrightarrow Y$ by $b\left(  j_{p}\circ\Psi\left(  x\right)
\right)  =u \left(  x\right)$ and let us that it is well defined.
If $x,q \in X$ are such that $j_{p}\circ\Psi\left(  x\right)  =j_{p}\circ\Psi\left(  q\right),
$ then
\begin{align*}
d_{Y}\left(  b\left(  j_{p}\circ\Psi\left(  x\right)  \right)  ,b\left(
j_{p}\circ\Psi\left(  q\right)  \right)  \right)   &   = d_{Y}\left(  u(x),u(q)\right) \\
&  \overset{\mathbf{(b)}}{ \leq} C\left\Vert
\Psi(x)-\Psi(q)\right\Vert _{L_{p}(K,\mu)}\\
&  \overset{(\ref{jf})}{=} C\left\Vert j_{p}\circ\Psi(x)-j_{p}\circ\Psi(q)\right\Vert _{L_{p}%
	(K,\mu)}=0.
\end{align*}
Therefore $b\left(  j_{p}\circ\Psi\left(  x\right)  \right)  =b\left(j_{p}\circ\Psi\left(  q\right)  \right)  $ and $b$ is Lipschitz.

Considering $X_p$ the norm closure of $j_{p}\circ\Psi\left(  X\right)$ in ${L_{p}(K,\mu)}$ and $\hat{b}: X_p \longrightarrow Y$ the natural extension of $b$ to $X_p$, it follows that $\hat{b}$ is a Lipschitz map and $\hat{b}\circ j_{p}\circ\Psi=u$.

\textbf{(c)}$\Rightarrow$\textbf{(d)} Let $J\colon Y \longrightarrow Z$ be an isometric embedding. Since $Z$ is an absolute Lipschitz retract, it follows that $J\circ \hat{b}$ has a
Lipschitz extension $B \colon L_{p} (K,\mu)\longrightarrow Z$ such that $B\circ
j_{p}\circ\Psi\left(  x\right)  =J\circ u \left(  x\right)  $ for every $x\in X$.

\textbf{(d)}$\Rightarrow$\textbf{(e)} This implication is obvious.

\textbf{(e)}$\Rightarrow$\textbf{(b)} Using that $B$ is Lipschitz, $B\circ
I_{\infty,p}\circ j_{\mu,\infty}^{\Psi}\left(  x\right)  =J\circ u\left(
x\right)  $ for every $x\in X$ and Remark \ref{obs}, we get
\begin{align*}
d_{Y}\left(  u(x),u(q)\right)   &  =d_{Z}\left(  J\circ u\left(  x\right)
,J\circ u\left(  q\right)  \right) \\
&  =d_{Z}\left(  B\circ I_{\infty,p}\circ j_{\mu,\infty}^{\Psi}\left(
x\right)  ,B\circ I_{\infty,p}\circ j_{\mu,\infty}^{\Psi}\left(  q\right)
\right) \\
&  \leq C\cdot\left\Vert I_{\infty,p}\circ j_{\mu,\infty}^{\Psi}%
(x)-I_{\infty,p}\circ j_{\mu,\infty}^{\Psi}(q)\right\Vert _{L_{p}(K,\mu)}\\
&  =C\cdot\left(  {\displaystyle\int\nolimits_{K}%
}\left\vert \Psi(x)(\varphi)-\Psi(q)(\varphi)\right\vert ^{p}d\mu
(\varphi)\right)  ^{1/p}%
\end{align*}
for all $  x,q \in X$.
\end{proof}

\begin{remark}\label{nnrem} Since the map $B$ above is defined on the Banach space $L_p(K,\mu)$, a glance at the classical Pietsch Factorization Theorem makes the following question quite natural: if, in Theorem \ref{tf}, $Y$ and $Z$ are Banach spaces and $J$ is a linear embedding (metric injection), can the map $B$ be chosen to be a linear operator? In the next section we shall see that this is not the case, showing that Theorem \ref{tf} cannot be improved in this direction.
\end{remark}

\begin{corollary}\label{simple}
Let $X, Y, K, \Psi$ and $u$ be as in Theorem \ref{tf}. If there exist a regular Borel probability measure $\mu$ on $K$ and a
Lipschitz map $\widetilde{u}\colon L_{p}(K,\mu)\longrightarrow Y$ such that the the
diagram
\begin{equation*}
\begin{tikzcd}
	C(K) \arrow{r}{j_{p}}  & L_p (K,\mu) \arrow{d}{\widetilde{u}} \\
	X \arrow[swap]{u}{\Psi} \arrow[swap]{r}{u} & Y
	\end{tikzcd}
\end{equation*} commutes, then $u$ is $\Psi$-Lipschitz $p$-summing.
\end{corollary}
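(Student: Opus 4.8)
The plan is to derive this corollary as a direct consequence of the equivalence \textbf{(c)}$\Leftrightarrow$\textbf{(a)} (or \textbf{(d)}$\Leftrightarrow$\textbf{(a)}) in Theorem \ref{tf}, by observing that the hypothesis is a stronger version of condition \textbf{(c)}: instead of a Lipschitz map defined on a closed subset $X_p$ of $L_p(K,\mu)$ containing $j_p(\Psi(X))$, we are now handed a Lipschitz map $\widetilde{u}$ defined on all of $L_p(K,\mu)$. First I would note that the commutativity of the given diagram says precisely that $\widetilde{u}\circ j_p\circ\Psi = u$, so that for every $x\in X$ we have $u(x)=\widetilde{u}(j_p(\Psi(x)))=\widetilde{u}(\Psi(x))$, using the identification $j_p(\Psi(x))=\Psi(x)$ recorded before Remark \ref{obs}.

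Next I would take $X_p := \overline{j_p(\Psi(X))}$, the norm closure of $j_p(\Psi(X))$ in $L_p(K,\mu)$, which is a closed subset of $L_p(K,\mu)$, and set $\hat b := \widetilde{u}|_{X_p}$, the restriction of $\widetilde{u}$ to $X_p$. Then $\hat b$ is Lipschitz (a restriction of a Lipschitz map is Lipschitz), we have $j_p(\Psi(X))\subset X_p$ by construction, and $\hat b\circ j_p\circ\Psi(x)=\widetilde{u}\circ j_p\circ\Psi(x)=u(x)$ for all $x\in X$. Thus the triple $(\mu, X_p, \hat b)$ witnesses that $u$ satisfies condition \textbf{(c)} of Theorem \ref{tf}, and therefore, by the equivalence \textbf{(c)}$\Rightarrow$\textbf{(a)}, $u$ is $\Psi$-Lipschitz $p$-summing.

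Alternatively, and even more directly, one can bypass condition \textbf{(c)} and use \textbf{(e)}$\Rightarrow$\textbf{(a)}: fix any isometric embedding $J$ of $Y$ into an absolute Lipschitz retract space $Z$ (for instance $Z=\ell_\infty(\Gamma)$ for a suitable index set), and put $B:=J\circ\widetilde{u}\circ I_{\infty,p}$ — wait, more cleanly, observe that $J\circ\widetilde{u}\colon L_p(K,\mu)\longrightarrow Z$ is Lipschitz and makes the relevant diagram in \textbf{(d)} commute, since $J\circ\widetilde{u}\circ j_p\circ\Psi=J\circ u$; hence \textbf{(d)} holds with $B=J\circ\widetilde{u}$, and \textbf{(d)}$\Rightarrow$\textbf{(b)}$\Rightarrow$\textbf{(a)} finishes the argument. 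There is essentially no obstacle here: the corollary is a genuine corollary, obtained by the trivial move of restricting a globally defined Lipschitz factorization to the closure of the relevant range, and the only point requiring a line of care is the routine observation that a restriction of a Lipschitz map is Lipschitz and that $j_p(\Psi(x))$ and $\Psi(x)$ are to be identified inside $L_p(K,\mu)$.
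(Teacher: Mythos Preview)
Your proposal is correct, and your alternative route via condition \textbf{(d)} with $B:=J\circ\widetilde{u}$ and $Z=\ell_\infty(\Gamma)$ is exactly the argument the paper gives. Your primary route through \textbf{(c)} by restricting $\widetilde{u}$ to $X_p=\overline{j_p(\Psi(X))}$ is an equally valid minor variation that avoids invoking any embedding into an absolute Lipschitz retract.
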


\begin{proof} Let the measure $\mu$ and the Lipschitz map $\widetilde{u}$ be as in the assumption. By \cite[Lemma 1.1]{BL} there exist a set $\Gamma$ and an embedding $J$ from $Y$ into the absolute Lipschitz retract $\ell_{\infty}(\Gamma)$. 
Defining
$$B \colon L_{p}(K,\mu)\longrightarrow\ell_{\infty}(\Gamma)~,~B=J\circ\tilde{u},$$ it follows that $B$ is a Lipschitz map and
$$(J\circ u)(x)=(J\circ\tilde{u}\circ
j_{2}\circ\Psi)(x)=(B\circ j_{2}\circ\Psi)(x)$$
for every $x\in X$, that is, the
following diagram commutes
\[
\begin{tikzcd}
	C(K) \arrow{rr}{j_{p}}  & & L_p (K,\mu) \arrow{d}{B} \\
	X \arrow[swap]{u}{\Psi} \arrow[swap]{r}{u} & Y \arrow[swap]{r}{J} & \ell_{\infty}(\Gamma).
	\end{tikzcd}
\]
The conclusion follows from Theorem \ref{tf}.
\end{proof}

The converse of the corollary above holds for absolutely 2-summing {\it linear} operators (see \cite[Corollary 2.16]{djt}). We do not know if the same holds in the nonlinear setting. It is not difficult to check that, if $u$ is $\Psi$-Lipschitz $2$-summing and, in addition, $Y$ is complete and in Theorem \ref{tf}(d), $Z$ can be supposed to be a linear space, $B$ to be linear and $J(Y)$ to be a subspace of $Z$, then there exists a
Lipschitz map $\widetilde{u}\colon L_{2}(K,\mu)\longrightarrow Y$ such that the diagram
\begin{equation*}
\begin{tikzcd}
	C(K) \arrow{r}{j_{2}}  & L_2 (K,\mu) \arrow{d}{\widetilde{u}} \\
	X \arrow[swap]{u}{\Psi} \arrow[swap]{r}{u} & Y
	\end{tikzcd}
\end{equation*}
commutes. 
But, as announced in Remark \ref{nnrem}, we will prove in the next section that the map $B$ cannot be supposed to be linear. So, we have the:

\medskip

\noindent{\bf Open problem.} Does the converse of Corollary \ref{simple} hold for $p=2$?

\section{Applications}

Among other applications, in this section we show that Theorem \ref{tf} recovers, as particular instances, several theorems proved separately in the literature.  

\begin{itemize}
\item {\bf Absolutely summing linear operators.}

Let $1\leq p<\infty$, $X,Y$ be Banach spaces and $u \colon X\longrightarrow Y$ be a bounded linear operator. Letting $
K=B_{X^{\ast}}$ endowed with the weak* topology and
\[
\Psi\colon X\longrightarrow C(B_{X^{\ast}})~, \Psi(x)(x^{\ast})=x^{\ast
}(x),
\]
$u$ is $\Psi$-Lipschitz $p$-summing if and only if there is a constant
$C\geq0$ such that
 \begin{equation*}
 \left(  \sum_{j=1}^{m}\Vert u(x_{j}- q_{j})\Vert_{Y}^{p}\right)  ^{\frac{1}{p}}\leq
C\sup_{x^{\ast}\in B_{X^{\ast}}}\left(  \sum_{j=1}^{m}|x^{\ast}(x_{j}
 - q_{j}
)|^{p}\right)  ^{\frac{1}{p}},\label{36M}
\end{equation*}
for all $x_{1},\ldots ,x_{m},q_{1},\ldots,q_{m} \in X$ and $m\in\mathbb{N}$.
Thus $u$ absolutely $p$-summing if and only if $u$ is $\Psi
$-Lipschitz $p$-summing. Applying condition (c) of Theorem \ref{tf} for $Z = \ell_\infty(B_{F^*})$ and the canonical embedding $i_F \colon F \longrightarrow \ell_\infty(B_{F^*})$, the linearity of $u$ and $\Psi$
 assure that the map $b$ of the proof of the theorem is linear as well. So, the injectivity of $\ell_{\infty}(B_{Y^*})$ allows us to choose $B$ to be a bounded linear operator. In this fashion, Theorem \ref{tf} recovers the classical Pietsch Factorization
Theorem for absolutely $p$-summing linear operators \cite[Theorem 2.13]{djt}.
\end{itemize}

\medskip

\begin{itemize}

\item {\bf Lipschitz $p$-summing operators.}

Let $1 \le p < \infty$, $X$ be a pointed metric space with distinguished point $0$, $X^{\#}$ be the space of all real valued Lipschitz functions on $X$ vanishing at $0$ endowed with the Lipschitz norm (see, e.g. \cite{FJ, saadi}), and $Lip(X;Y)$ be the set of all
Lipschitz maps from $X$ to the metric space $Y$. As $K=B_{X^\#}$ is compact Hausdorff with the topology of pointwise convergence (or, alternatively, as $X^\#$ is a dual Banach space, $K =B_{X^{\#}}$ is compact Hausdorff with the weak* topology), we can consider our construction associated to the map
\[
\Psi\colon X \longrightarrow C(B_{X^{\#}};\mathbb{R})~,~ \Psi(x)(f) = f(x).
\]
So, for map $u\in Lip(X;Y)$, we have that $u$ is $\Psi$-Lipschitz $p$-summing if and only if 
$u$ is Lipschitz $p$-summing in the sense of \cite{FJ}, and, in this case, 
Theorem \ref{tf} recovers the
corresponding factorization theorem \cite[Theorem 1]{FJ}.
\end{itemize}

Now we are ready to answer the question posed in Remark \ref{nnrem}.

\begin{proposition} Suppose that, in Theorem \ref{tf}, $Y$ and $Z$ are Banach spaces and $J$ is a linear embedding. In general, the map $B$ that closes the commutative diagram cannot be chosen to be a linear operator.
\end{proposition}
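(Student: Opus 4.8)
The plan is to prove the proposition by exhibiting a completely elementary counterexample in which all the spaces and maps appearing in the diagram of Theorem~\ref{tf}(d) collapse to copies of $\mathbb{R}$. Concretely, I would take $X=\{1,2,3\}$ (a bare set), let $K=\{\ast\}$ be a one-point space, so that $C(K)=\mathbb{R}$ and the only regular Borel probability measure on $K$ is the Dirac mass, whence $L_{p}(K,\mu)=\mathbb{R}$ and $j_{p}$ is the identity of $\mathbb{R}$; let $\Psi\colon X\hookrightarrow\mathbb{R}$ be the inclusion, i.e.\ $\Psi(i)=i$; let $Y=Z=\mathbb{R}$ (a Banach space which is an absolute Lipschitz retract) and $J=\mathrm{id}_{\mathbb{R}}$ (a linear isometric embedding); and define $u\colon X\to\mathbb{R}$ by $u(1)=u(2)=1$ and $u(3)=2$.

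First I would check that $u$ is $\Psi$-Lipschitz $p$-summing. Since $K$ is a singleton, the right-hand side of the defining inequality equals $C\big(\sum_{j}|\Psi(x_{j})-\Psi(q_{j})|^{p}\big)^{1/p}$, so it suffices to produce a constant $C$ with $|u(x)-u(q)|\le C\,|\Psi(x)-\Psi(q)|$ for every pair $(x,q)\in X\times X$ and then sum over $j$. As $X$ has only three elements this is a finite verification: the pairs $(1,2)$, $(1,3)$, $(2,3)$ give $0\le 1$, $1\le 2$, $1\le 1$, so $C=1$ works. Thus $u$ satisfies Theorem~\ref{tf}(a), and the implication (a)$\Rightarrow$(d) already guarantees that \emph{some} Lipschitz map $B\colon L_{p}(K,\mu)\to Z$ closes the diagram; for instance the $1$-Lipschitz map $B(t)=\max\{1,\,t-1\}$ works, as $B(1)=B(2)=1$ and $B(3)=2$.

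Next I would show that no \emph{linear} $B$ can close the diagram. With the above data $j_{p}=\mathrm{id}_{\mathbb{R}}$, so commutativity of the diagram in Theorem~\ref{tf}(d) says precisely that $B(\Psi(i))=J(u(i))=u(i)$ for $i=1,2,3$, i.e.\ $B(1)=1$, $B(2)=1$ and $B(3)=2$. But every linear map $B\colon\mathbb{R}\to\mathbb{R}$ has the form $B(t)=t\,B(1)$, so $B(1)=1$ forces $B(2)=2\neq 1$. Hence the commutative diagram of Theorem~\ref{tf}(d) admits no linear $B$, which proves the proposition.

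I would close with a remark indicating that this obstruction is intrinsic rather than an artefact of the trivial choice of $K$. Indeed, if a linear $B$ existed in Theorem~\ref{tf}(d), then $B\circ j_{p}\colon C(K)\to Z$ would be a bounded linear operator with $(B\circ j_{p})\circ\Psi=J\circ u$, so $J\circ u$ would be forced to respect every linear relation holding in $C(K)$ among the vectors $\{\Psi(x):x\in X\}$; in the example the relation $2\Psi(1)-\Psi(2)=0$ is violated since $2u(1)-u(2)=1\neq 0$. The same mechanism yields counterexamples among genuine Lipschitz $p$-summing operators $u\colon X\to Y$ (with $X$ a pointed metric space and $\Psi$ the canonical map $\Psi(x)(f)=f(x)$, $f\in B_{X^{\#}}$) whenever $u$ does not send the base point to $0$: there $\Psi(0)=0$ while $J(u(0))\neq 0$, so no linear $B$ can exist. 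There is no genuinely hard step in this argument; the only point requiring a bit of care is to confirm that the example meets \emph{all} the hypotheses of Theorem~\ref{tf} — in particular that $u$ is honestly $\Psi$-Lipschitz $p$-summing — so that a (nonlinear) Lipschitz $B$ is guaranteed and the failure is truly a failure of linearity.
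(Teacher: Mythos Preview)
Your argument is correct and complete: the three-point example with $K$ a singleton genuinely satisfies all the hypotheses of Theorem~\ref{tf}, a Lipschitz $B$ exists, and the linear-relation obstruction $2\Psi(1)=\Psi(2)$ versus $2u(1)\neq u(2)$ kills any linear $B$. This proves the proposition as stated.

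The paper's proof, however, takes a substantially different and less elementary route. Rather than building an ad hoc $\Psi$, it works in the canonical Lipschitz $p$-summing framework ($X$ a pointed metric space, $K=B_{X^{\#}}$, $\Psi(x)(f)=f(x)$) and argues by contradiction: if a linear $B$ always existed, then for any Lipschitz $p$-summing $T\colon X\to Y$ one could pass to the free Banach space $\mathcal{F}(X)$, linearize $T$ to $T_{L}\colon\mathcal{F}(X)\to Y$, and use the linearity of $B$, $j_{p}$, $i_{\mathcal{F}(X)}$ together with density of $\mathrm{span}\,\delta_{X}(X)$ to factor $i_{Y}\circ T_{L}$ through the $p$-summing operator $j_{p}$; injectivity of the $p$-summing ideal would then force $T_{L}$ itself to be $p$-summing, contradicting a known example of Saadi. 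What your approach buys is transparency and self-containment: no free spaces, no external reference, no ideal theory. What the paper's approach buys is a stronger conclusion: the obstruction survives even when $\Psi$ is the \emph{natural} evaluation map and $u$ preserves the base point, so the failure of linearity is not an artefact of an artificial choice of $\Psi$ or of the base-point trick you mention in your closing remark, but reflects a genuine gap between Lipschitz $p$-summability and linear $p$-summability of the linearization.
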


\begin{proof} Assume that, under the prescribed conditions, $B$ can  always be chosen to be a linear operator. Let $T \colon X \longrightarrow Y$ be a Lipschitz $p$-summing operator, $1 \leq p < \infty$, from a pointed metric space $X$ to a Banach space $Y$. Denote by $\mathcal{ F}(X)$ the free Banach space associated to $X$, by $\delta_X \colon X \longrightarrow \mathcal{ F}(X)$ the canonical embedding and by $T_L \colon \mathcal{ F}(X) \longrightarrow Y$ the linearization of $T$, that is, $T_L$ is linear, bounded and $T  = T_L \circ \delta_X$. Remembering that $X^{\#} = \mathcal{F}(X)^*$ isometrically, the Pietsch Factorization Theorem for this class of operators, which we have just seen above, gives a measure $\mu$ on $B_{X^{\#}} = B_{\mathcal{F}(X)^*}$ and a Lipschitz map $B$ such that the following diagram is commutative:
	
\begin{equation*}
	\begin{tikzcd} C (B_{\mathcal{F}(X)^*}) \arrow{rr}{j_p} & & L_p (B_{\mathcal{F}(X)^*},\mu) \arrow{d}{B} \\ X \arrow[swap]{u}{i_{\mathcal{F}(X)} \circ \delta_X} \arrow[swap]{r}{T} & Y \arrow[swap]{r}{i_{Y}} & \ell_{\infty}\left( B_{Y^*}\right) \end{tikzcd} \label{CCC}%
	\end{equation*}
(where $i_Y$ is the canonical embedding). Our assumption says that $B$ can be supposed to be linear (we already know that $B$ is continuous because it is Lipschitz). Giving scalars $\lambda_1, \ldots , \lambda_n$ and $x_1, \ldots, x_n \in X$, we have

\begin{align*} (i_Y \circ T_L)& \left(\sum_{j=1}^n \lambda_j \delta_X(x_j)\right) = \sum_{j=1}^n \lambda_ji_Y(T_L \circ \delta_X(x_j)) = \sum_{j=1}^n \lambda_j(i_Y \circ T)(x_j) \\
&= \sum_{j=1}^n \lambda_j (B \circ j_p \circ i_{\mathcal{F}(X)} \circ \delta_X)(x_j) = (B \circ j_p \circ i_{\mathcal{F}(X)})\left(\sum_{j=1}^n \lambda_j \delta_X(x_j)\right),
\end{align*}
proving that the bounded linear operators $i_Y \circ T_L$ and $B \circ j_p \circ i_{\mathcal{F}(X)}$ coincide on ${\rm span}\{\delta_X(X)\}$. But these two operators are continuous and ${\rm span}\{\delta_X(X)\}$ is dense in $\mathcal{F}(X)$, so $i_Y \circ T_L =B \circ j_p \circ i_{\mathcal{F}(X)}$. Since $j_p$ is $p$-summing it follows that $i_Y \circ T_L$ is $p$-summing as well, from which we conclude that $T_L$ is $p$-summing because the ideal of $p$-summing operators is injective. This means that the linearization of every Lipschitz $p$-summing operator is a $p$-summing linear operator. This contradicts \cite[Remark 3.3]{saadi} and completes the proof.
\end{proof}

\medskip

\begin{itemize}
		\item {\bf $(D,p)$-summing linear operators.}

	The class of $\left( D,p \right) $-summing linear operators was introduced by Mart\'{i}nez-Gim\'{e}nez and S\'{a}nchez-P\'{e}rez in \cite{MGSP}.

	\begin{definition} \cite[Definition 3.10]{MGSP}
		Let $Y$ be a Banach space and $X$ be a Banach function space compatible with the countably additive vector measure $\lambda$ of range dual pair $D = \left( \lambda, \lambda' \right) .$ A linear operator $T\colon X \longrightarrow Y$ is $\left( D, p \right) $-summing, $1 \leq p < \infty$, if there is a constant $C$ such that
				\[ \left( \displaystyle\sum_{j=1}^{m} \left\Vert T \left( f_{j} \right) \right\Vert ^{p} \right) ^{\frac{1}{p}} \leq C \sup_{g \in B_{L_{1}\left( \lambda ' \right) }} \left( \displaystyle \sum_{j=1}^{m} \left\vert \left\langle  \int_{\Omega}f_{i}\, d \lambda, \int_{\Omega'} g \, d \lambda' \right\rangle \right\vert ^{p} \right) ^{\frac{1}{p}} , \]
		for every natural $m$ and functions $f_{1},\ldots , f_{m} \in X.$
	\end{definition}

In \cite{MGSP} it is proved that $B_{L_1(\lambda')}$ is a (bounded) subset of a dual space, so its weak* closure in this dual space, denoted by $ \overline{B_{L_{1}\left( \lambda ' \right) }}$, is a compact Hausdorff space. We can consider, in our construction, $K = \overline{B_{L_{1}\left( \lambda ' \right) }}$ and the map
\begin{equation*}
	\Psi \colon  X \longrightarrow C\left( \overline{B_{L_{1}\left( \lambda ' \right) }}\right), \; \Psi(f)(g) =   \left\langle \int_{\Omega} f d \lambda, \int_{\Omega'} g d \lambda' \right\rangle.
	\end{equation*}

Thus, a linear operator $T \colon X \longrightarrow Y$ is $\left( D, p \right) $-summing if and only if $T$ is $\Psi$-Lipschitz $p$-summing. Theorem \ref{tf} characterizes $(D,p)$-summing operators by means of the following commutative diagram
\begin{equation*}
	\begin{tikzcd} C (\overline{B_{L_{1}\left( \lambda ' \right) }}) \arrow{rr}{j_p} & & L_p (\overline{B_{L_{1}\left( \lambda ' \right) }},\mu) \arrow{d}{B} \\ X \arrow[swap]{u}{\Psi} \arrow[swap]{r}{T} & Y \arrow[swap]{r}{i_{Y}} & \ell_{\infty}\left( B_{Y^*}\right) \end{tikzcd} \label{CCC}%
	\end{equation*}
where $B$ is a linear operator. Note that this characterization recovers, in an equivalent form, the original factorization theorem for this class \cite[Theorem 3.13]{MGSP}.

	\medskip

\end{itemize}

\begin{itemize}

\item {\bf Absolutely $p$-summing $\Sigma$-operators.}

In this subsection we follow the recent approach of Angulo-L\'opez and Fern\'andez-Unzueta \cite{mexicana}. Given Banach spaces $X_{1}, \ldots, X_{n}$,
$$\Sigma_{X_{1}, \ldots, X_{n}}:=\{x_{1} \otimes \cdots \otimes x_{n}\in X_{1} \otimes \cdots \otimes X_{n}: x_{1}\in X_{i} , i=1,\ldots,n\}$$ is the metric space of decomposable tensors endowed with the metric induced by the projective tensor norm. It is called the \textit{metric Segre cone} of $X_{1}, \ldots, X_{n}$. By $\mathcal{L}(\Sigma_{X_{1}, \ldots, X_{n}})$ we denote the space of scalar-valued continuous $\Sigma$-operators endowed with the Lipschitz norm, which happens to be a dual Banach space.

\begin{definition}
	Let  $X_{1}, \ldots, X_{n}, Y$ be Banach spaces. A bounded $\Sigma$-operator $f\colon
	\Sigma_{X_{1}, \dots, X_{n}}\longrightarrow Y$ is \textit{absolutely $p$-summing}, $1 \leq p < \infty$, if there
	is a $C\geq0$ so that%
	\begin{equation*}
	\left(\sum \limits_{j=1}^{m}\left\Vert f(u_{j})-f(v_{j})\right\Vert ^{p}\right) ^{\frac{1}{p}}\leq
	C\cdot\sup_{\varphi\in B_{\mathcal{L}(\Sigma_{X_{1}, \ldots, X_{n}})}}\left(\sum
	\limits_{j=1}^{m}\left\vert \varphi(u_{j})-\varphi(v_{j})\right\vert ^{p}\right) ^{\frac{1}{p}}
	\end{equation*}
	for every natural number $m$ and all $u_{j},v_{j} \in \Sigma_{X_{1}, \ldots, X_{n}}$.
\end{definition}

Choosing, in the framework developed in this paper,
\begin{equation*}
X=\Sigma_{X_{1}, \ldots, X_{n}},~K=(B_{\mathcal{L}(\Sigma_{X_{1}, \ldots, X_{n}})},w^{*}) {\rm ~and}
\end{equation*}%
\begin{equation*}
\Psi\colon \Sigma_{X_{1}, \ldots, X_{n}}\longrightarrow C(B_{\mathcal{L}(\Sigma_{X_{1}, \ldots, X_{n}})};\mathbb{K})~,~\Psi(u)(\varphi) = \varphi(u),
\end{equation*}%
we have that a bounded $\Sigma$-operator $f\colon
\Sigma_{X_{1}, \ldots, X_{n}}\longrightarrow Y$ is absolutely $p$-summing if and only if it is $\Psi$-Lipschitz $p$-summing.

Applying Theorem \ref{tf} we recover exactly the Pietsch-type factorization theorem \cite[Theorem 2.2]{mexicana}.


	\medskip
	
\item {\bf Lipschitz $p$-dominated operators.}

This class of operators was introduced by Chen and Zheng \cite{ChZh}.

\begin{definition}
	A Lipschitz mapping $T\colon X \longrightarrow Y$ between Banach spaces is {\it Lipschitz $p$-dominated}, $1 \leq p < \infty$, if there exist a Banach space $Z$ and an absolutely $p$-summing linear operator $L \colon X \longrightarrow Z$ such that
		\begin{equation*}
	 \left\Vert T(x)-T(y) \right \Vert \leq \left \Vert L(x)-L(y) \right\Vert {\rm ~for~all~} x, y \in X,
	\end{equation*}
or, equivalently (see \cite{ChZh}), if there exists a constant $C>0$ such that
	\begin{equation*}
	\left( \displaystyle \sum_{i=1}^{n} \left\Vert Tx_{i} - Ty_{i} \right\Vert^{p} \right)^{\frac{1}{p}} \leq C \sup_{x^{\ast}\in B_{X^{\ast}}} \left( \displaystyle \sum_{i=1}^{n} \left\vert x^{\ast}(x_{i} - y_{i}) \right\vert^{p}  \right) ^{\frac{1}{p}} ,
	\end{equation*}
for all $n \in \mathbb{N}$, $x_{1}, \ldots, x_{n}, y_{1}, \ldots, y_{n} \in X $.
\end{definition}
Selecting $K = B_{X^{\ast}}$ with the weak* topology and
\begin{equation}\label{bla}
\Psi \colon X \longrightarrow C\left( B_{X^{\ast}}\right)~,~\Psi(x)(x^{\ast}) =  x^{\ast}(x),
\end{equation}
it is plain that a Lipschitz mapping $T \colon X \longrightarrow Y$ is $p$-dominated if and only if $T$ is $\Psi$-Lipschitz $p$-summing.

Therefore, Theorem \ref{tf} recovers the factorization theorem for this class of mappings \cite[Theorem 3.3]{ChZh}.	

\medskip

\item {\bf Strongly Lipschitz $p$-integral operators.}

Our interest in this class, which is also due to Chen and Zheng \cite{ChZh}, relies on the fact that it is defined by means of a commutative diagram similar to the ones we are working with in this paper. By $J_Y$ we denote the canonical embedding from a Banach space $Y$ into its bidual $Y^{**}$. Remember that, for a finite measure $\mu$, $I_{\infty,p} \colon L_\infty(\mu) \longrightarrow L_p(\mu)$ denotes the canonical operator.
\begin{definition}
	A Lipschitz mapping $T\colon X \longrightarrow Y$ between Banach spaces is {\it strongly Lipschitz $p$-integral}, $1 \leq p \leq \infty$, if there are a probability measure space $(\Omega, \Sigma, \mu )$, a bounded linear operator $A\colon X \longrightarrow L_{\infty}(\mu)$ and a Lipschitz mapping $B\colon L_{p}(\mu) \longrightarrow Y^{\ast \ast}$ giving rise to the following commutative diagram:
	\[
	\begin{tikzcd}
	L_\infty (\mu) \arrow{rr}{I_{\infty, p}}  & & L_p (\mu) \arrow{d}{B} \\
	X \arrow[swap]{u}{A} \arrow[swap]{r}{T} & Y \arrow[swap]{r}{J_{Y}} & Y^{\ast \ast}
	\end{tikzcd}
	\]
\end{definition}

In \cite[Theorem 3.6]{ChZh} it is proved that every strongly Lipschitz $p$-integral operator is Lipschitz $p$-dominated. So, every strongly Lipschitz $p$-integral operator is $\Psi$-Lipschitz $p$-summing for the same map $\Psi$ in (\ref{bla}). Moreover, in \cite[Corollary 3.8]{ChZh} it is proved that the classes of Lipschitz $p$-dominated (the class whose factorization theorem we have just recovered above) and strongly Lipschitz $p$-integral operators coincide when the domain is a $C(K)$ space. Next we apply our unified factorization theorem to show that the same happens if the target space is a Lindenstrauss space. 

Recall that a Lindenstrauss space is a real Banach space whose dual is isometrically isomorphic to some $L_1(\mu)$-space.

\begin{proposition} Let $X$ be a real Banach space, $Y$ be a Lindenstrauss space and $1 \leq p < \infty$. A map $T \colon X \longrightarrow Y$ is strongly Lipschitz $p$-integral if and only if $T$ is Lipschitz $p$-dominated.
\end{proposition}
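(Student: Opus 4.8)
The plan is to treat the two implications separately. The implication that a strongly Lipschitz $p$-integral operator is Lipschitz $p$-dominated is already available in full generality: it is \cite[Theorem 3.6]{ChZh}, and it requires no special hypothesis on $X$ or $Y$. So the genuine content of the proposition is the converse, and this is where the Lindenstrauss assumption on $Y$ will be used, entering through the structure of the bidual $Y^{**}$.

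The key preliminary observation is that $Y^{**}$ is an absolute Lipschitz retract. Indeed, since $Y$ is a Lindenstrauss space, $Y^{*}$ is isometrically isomorphic to $L_{1}(\nu)$ for some measure $\nu$, and hence $Y^{**}$ is isometrically isomorphic to $L_{1}(\nu)^{*}=L_{\infty}(\nu)$. As a real Banach space, $L_{\infty}(\nu)$ is an $AM$-space with order unit, so by the Kakutani representation theorem it is isometrically (lattice-)isomorphic to $C(\widehat{K};\mathbb{R})$ for a suitable compact Hausdorff space $\widehat{K}$. Consequently $Y^{**}$ is an absolute Lipschitz retract, by \cite[Theorem 1.6]{BL} together with \cite[Theorem 6(b)]{Lindenstrauss}. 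Of course the canonical embedding $J_{Y}\colon Y\longrightarrow Y^{**}$ is an isometric embedding.

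Now suppose $T\colon X\longrightarrow Y$ is Lipschitz $p$-dominated. As noted right after (\ref{bla}), this is equivalent to $T$ being $\Psi$-Lipschitz $p$-summing for $K=(B_{X^{*}},w^{*})$ and $\Psi(x)(x^{*})=x^{*}(x)$. Applying the equivalence \textbf{(a)}$\Leftrightarrow$\textbf{(e)} of Theorem \ref{tf} to the particular isometric embedding $J=J_{Y}$ into $Z:=Y^{**}$ (which is legitimate by the previous paragraph, precisely because of the ``for some (or any)'' clause and the fact that $Y^{**}$ is an absolute Lipschitz retract), we obtain a regular Borel probability measure $\mu$ on $B_{X^{*}}$ and a Lipschitz map $B\colon L_{p}(B_{X^{*}},\mu)\longrightarrow Y^{**}$ closing the diagram of Theorem \ref{tf}(e), with left vertical arrow $j_{\mu,\infty}^{\Psi}$. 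To finish, it suffices to observe that $j_{\mu,\infty}^{\Psi}$ is in fact a bounded \emph{linear} operator: by construction it is the composition of the linear isometry $\Psi\colon X\longrightarrow C(B_{X^{*}})$ with the canonical linear contraction $C(B_{X^{*}})\longrightarrow L_{\infty}(B_{X^{*}},\mu)$ (well defined and contractive since $\mu$ is finite and $B_{X^{*}}$ is compact). Setting $A:=j_{\mu,\infty}^{\Psi}$ and taking $(\Omega,\Sigma,\mu)=(B_{X^{*}},\text{Borel sets},\mu)$, the diagram thus produced is exactly the one in the definition of a strongly Lipschitz $p$-integral operator; hence $T$ is strongly Lipschitz $p$-integral.

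The only step that is not purely formal is the identification of $Y^{**}$ with a $C(\widehat{K})$-space (equivalently, the verification that it is an absolute Lipschitz retract), which I expect to be the main point to get right; after that, the argument is just a matter of reading off Theorem \ref{tf}(e) for the embedding $J_{Y}$ and recording that the relevant canonical maps happen to be linear. A secondary point worth making explicit is the use of the ``for some (or any)'' clause of Theorem \ref{tf}(e), since it is exactly what permits choosing $Z=Y^{**}$ and $J=J_{Y}$.
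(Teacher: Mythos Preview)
Your proof is correct and follows essentially the same route as the paper: cite \cite[Theorem~3.6]{ChZh} for one direction, identify Lipschitz $p$-dominated with $\Psi$-Lipschitz $p$-summing, apply Theorem~\ref{tf}(e) with $J=J_{Y}$ and $Z=Y^{**}$, and then observe that $j_{\mu,\infty}^{\Psi}$ is linear. The only difference is in how you justify that $Y^{**}$ is an absolute Lipschitz retract: the paper simply invokes that the bidual of a Lindenstrauss space is an injective Banach space (hence a $1$-absolute Lipschitz retract), whereas you pass through the representation $Y^{**}\cong L_{\infty}(\nu)\cong C(\widehat{K};\mathbb{R})$; your route is fine, though the identification $L_{1}(\nu)^{*}=L_{\infty}(\nu)$ literally requires $\nu$ to be localizable (or $\sigma$-finite), so it is cleaner to say directly that the dual of any $L_{1}$-space is an abstract $M$-space with unit, hence isometric to some $C(\widehat{K};\mathbb{R})$.
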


\begin{proof} One direction holds in general by \cite[Theorem 3.6]{ChZh}. For the converse, let  $T\colon X \longrightarrow Y$ be a Lipschitz $p$-dominated operator. In the previous subsection we saw that $T$ is $\Psi$-Lipschitz $p$-summing for the map $\Psi$ in (\ref{bla}). Since $Y$ is a Lindenstrauss space, $Y^{**}$ an injective Banach space, hence a $1$-absolute Lipschitz retract. Considering the canonical embedding $J_Y$, Theorem \ref{tf} guarantees the existence of a Lipschitz mapping $B\colon L_{p}(\mu) \longrightarrow Y^{\ast \ast}$ such that following diagram commutes
	\[
	\begin{tikzcd}
	L_\infty (\mu) \arrow{rr}{I_{\infty, p}}  & & L_p (\mu) \arrow{d}{B} \\
	X \arrow[swap]{u}{j_{\mu,\infty}^{\Psi}} \arrow[swap]{r}{T} & Y \arrow[swap]{r}{J_Y} & Y^{\ast \ast}
	\end{tikzcd}
	\]
Since $\Psi$ is linear, it follows that $j_{\mu,\infty}^{\Psi}$ is linear as well, proving that $T$ is strongly Lipschitz $p$-integral.
\end{proof}



\medskip

	\item {\bf Arbitrary summing operators taking values in metric spaces.}

	Here we establish a factorization theorem for a quite large (new) class of summing operators.

Given Banach spaces $X_1, \ldots, X_n$, by $\mathcal{ L}(X_1, \ldots, X_n;\mathbb{K})$ we denote the space of continuous $n$-linear functionals on $X_1 \times \cdots \times X_n$ endowed with the usual sup norm.

	\begin{definition}
		Let $X_{1},\ldots,X_{n}$ be Banach spaces, $Y=(Y,d)$ be a metric space and, for $i = 1, \ldots, n$, let $Z_i$ be a non-void subset (not necessarily a linear subspace) of $X_i$. An
		arbitrary map $T\colon Z_{1}\times\cdots\times Z_{n}\longrightarrow Y$ is
		{\it absolutely $p$-summing} if there exists $C\geq0$ such that%
		\begin{equation*}
		\left(  \sum\limits_{j=1}^{m}\left(  d\left(  T(v_{j}),T(u_{j})\right)  \right)  ^{p}\right)
		^{1/p}\\
		\leq C\underset{\varphi\in B_{\mathcal{L}(X_{1},\ldots,X_{n};\mathbb{K})}}{\sup
		}\left(\sum\limits_{j=1}^{m}\mid\varphi(v_{j})- \varphi(u_{j})\mid^{p}\right)
		^{1/p}%
		\end{equation*}
		for every $m\in\mathbb{N}$ and all $v_{j}, u_{j}\in Z_{1}\times\cdots\times Z_{n}$,
		$j=1,\ldots,m.$
	\end{definition}
	Denoting by $X_{1} \hat{\otimes}_{\pi} \cdots \hat{\otimes}_{\pi} X_{n}$ the (completed) projective tensor product of $X_1, \ldots, X_n$ and choosing $K = ( B_{\left( X_{1} \hat{\otimes}_{\pi} \cdots \hat{\otimes}_{\pi} X_{n} \right)^{\ast}}, w^*)$,
	\begin{equation*}
	\Psi\colon  Z_{1} \times \cdots \times Z_{n} \longrightarrow C\left( B_{\left( X_{1} \hat{\otimes}_{\pi} \cdots \hat{\otimes}_{\pi} X_{n} \right)^* }\right)~,~ \Psi(x_1, \ldots, x_n)(\varphi) = \varphi(x_1 \otimes \cdots \otimes x_n),
	\end{equation*}
	an arbitrary mapping $T \colon Z_{1} \times \cdots \times  Z_{n} \longrightarrow Y$ is absolutely $p$-summing if and only if $T$ is $\Psi$-Lipschitz $p$-summing.

	A domination-factorization theorem for this class of  arbitrary mappings follows from Theorem \ref{tf}.
	
\end{itemize}

\end{document}